\definecolor{halfgray}{gray}{0.55} 
\definecolor{webgreen}{rgb}{0,0.5,0}
\definecolor{webbrown}{rgb}{.6,0,0} \hypersetup{%
\newtheorem{theorem}{Theorem}
\theoremstyle{definition}
\newtheorem{example}{Example}[section]
\newtheorem{remark}{Remark}[section]
\renewcommand{\epsilon}{\varepsilon}
\def\Id{\text{\rm Id}}
\def\cA{\EuScript{A}}
\def\N{\mathbb{N}}
\def\Z{\mathbb{Z}}
\def\R{\mathbb{R}}
\begin{document}
\title[Multiscale linearization of nonautonomous systems]{Multiscale linearization of nonautonomous systems}

\author{Lucas Backes}
\address{\noindent Departamento de Matem\'atica, Universidade Federal do Rio Grande do Sul, Av. Bento Gon\c{c}alves 9500, CEP 91509-900, Porto Alegre, RS, Brazil.}
\email{lucas.backes@ufrgs.br} 
\author{Davor Dragi\v cevi\' c}
\address{Faculty of Mathematics, University of Rijeka, Croatia}
\email{ddragicevic@math.uniri.hr}

\begin{abstract}
We present sufficient conditions under which a given linear nonautonomous system and its 
 nonlinear perturbation  are topologically conjugated. Our conditions are of a very general form and provided that the nonlinear perturbations are well-behaved, we do not assume any 
  asymptotic behaviour of the linear system. Moreover, the control on the nonlinear perturbations may differ along finitely many mutually complementary directions. We consider both the cases of one-sided  discrete and continuous dynamics.
\end{abstract}

\keywords{Multiscale, linearization, nonautonomous dynamics}
\subjclass[2010]{34D30, 34D09}
\maketitle

\section{Introduction}

Let us consider a (one-sided) linear nonautonomous difference equation given by
\begin{equation}\label{eq: linear system}
x_{n+1}=A_nx_n, \quad n\in \N,
\end{equation}
as well as its nonlinear perturbation
\begin{equation}\label{eq: nonlinear system}
x_{n+1}=A_nx_n + f_n(x_n), \quad n\in \N,
\end{equation}
where $A_n:X\to X$, $n\in \N$ is a sequence of invertible bounded linear operators acting on a Banach space $X$ and 
$f_n: X\to X$, $n\in \N$ is a sequence of  (nonlinear) maps. In this note we are interested in describing sufficient conditions under which the systems \eqref{eq: linear system} and \eqref{eq: nonlinear system} are \emph{topologically conjugated}, meaning that there exists a sequence of homeomophisms $H_n\colon X\to X$, $n\in \N$ mapping the trajectories  of~\eqref{eq: linear system} into trajectories  of~\eqref{eq: nonlinear system}.
 Whenever such conjugacies exist, many important dynamical properties of the \emph{nonlinear system} \eqref{eq: nonlinear system} can be obtained by studying the \emph{linear system} \eqref{eq: linear system}, which in general is much easier to deal with.

\emph{Linearization problems}, as the one described above, have a long history. As cornerstones of this theory (dealing with the case of autonomous dynamics),  we refer to the works of Grobman \cite{G1,G2} and Hartman \cite{H,H1,H2}.  The first linearization results dealing with the case of infinite-dimensional dynamics are due to Palis~\cite{Palis} and Pugh~\cite{Pugh}. The problem of formulating sufficient conditions under which the conjugacy exhibits higher regularity properties was first considered in the pioneering works of Sternberg~\cite{Stern, Stern2}.

The first nonautonomous version of the Grobman-Hartman theorem was established by Palmer~\cite{Palmer} for the case of continuous time. The discrete time version of his result (formulated in~\cite{AW}) asserts that~\eqref{eq: linear system} and~\eqref{eq: nonlinear system}
are topologically conjugated provided that the following conditions hold:
\begin{itemize}
\item \eqref{eq: linear system} admits an exponential dichotomy (see~\cite{Coppel});
\item the nonlinear terms $f_n$ are bounded and uniformly Lipschitz with a sufficiently small Lipschitz constant.
\end{itemize}
In addition, several authors obtained important extensions of the Palmer's theorem by relaxing  assumptions related to the linear systems~\eqref{eq: linear system} (or its continuous counterpart). We refer to~\cite{BDcol, BDP, CGR, CR, Jiang, Lin, RS} and references therein. 
For recent results dealing with the higher regularity of conjugacies in nonautonomous linearization, we refer to~\cite{BD2, CJ, CMR, CR0, CDS, D, DZZ, DZZ2}.

An ubiquitous assumption in most of  those  results  is the existence of a decomposition of the phase space $X$ into the stable and unstable directions along which~\eqref{eq: linear system} exhibits contraction and expansion, respectively. 
In other words, it is assumed that~\eqref{eq: linear system} exhibits some sort of dichotomic behaviour (although not necessarily of exponential nature). The key idea is that the lack of hyperbolicity can be compensated by properly controlling the ``size'' of nonlinear terms $f_n$ in~\eqref{eq: nonlinear system}. A notable exception is the work of Reinfelds and \v{S}teinberga~\cite{RS} in which the authors obtain a linearization result without any assumptions related to the asymptotic behaviour of~\eqref{eq: linear system}. However, the conditions concerned with nonlinearities $f_n$ are expressed in terms of a Green function corresponding to~\eqref{eq: linear system} which is still essentially given by decomposing $X$ into two directions.

In this work, instead of considering a decomposition of $X$ into just two directions, we allow for a decomposition of $X$ into several directions with possible different behaviours of \eqref{eq: linear system} along  each of those directions. Our conditions are of general form as in \cite{BDP,RS} and no asymptotic behaviour is required for the linear dynamics. On the other hand, the more ``non-hyperbolic" the linear system is (along certain direction),  the more restrictive are the assumptions on the perturbations $f_n$ (along that direction). In fact, we allow the presence of certain  directions along which we do not impose any conditions on~\eqref{eq: linear system} and on the  nonlinear perturbations $f_n$ (besides requiring those  to be continuous and bounded). In this general case, we obtain only a \emph{quasi-conjugacy} between systems~\eqref{eq: linear system} and~\eqref{eq: nonlinear system},  meaning that they are conjugated except for a given deviation in the directions along which we have no control. To the best of our knowledge, this is the first time such a general result appears in the literature. We stress that our results are motivated by the recent paper by Pilyugin~\cite{Pil} which deals with the multiscale nonautonomous  shadowing.

The paper is organized as follows. In Section~\ref{DT}, we consider the case of discrete time, i.e. we establish sufficient conditions under which~\eqref{eq: linear system} and~\eqref{eq: nonlinear system} are topologically conjugated. We discuss in detail the relationship between our result and related results in the literature and we provide an explicit example illustrating the strenght of our result.  Finally, in Section~\ref{CT} we establish an analogous result in the case of continuous time.

 \section{The case of discrete time}\label{DT}
\subsection{Preliminaries}\label{P}
Let $X=(X, | \cdot |)$ be an arbitrary Banach space and denote by $\mathcal B(X)$ the space of all bounded linear operators on $X$. By $\|\cdot\|$,  we will denote the operator norm on $\mathcal{B}(X)$. Given a sequence $(A_n)_{n\in \N}$ of invertible  operators in $\mathcal B(X)$ and $m,n\in \N$, let us consider the associated linear cocycle given by 
\[
\cA(m, n)=\begin{cases}
A_{m-1}\cdots A_n & \text{if $m>n$;}\\
\Id & \text{if $m=n$;} \\
A_m^{-1}\cdots A_{n-1}^{-1} & \text{if $m<n$.}
\end{cases}
\]

\subsection{Multiscale}\label{Multi} Let $K$ be a finite set of the form $K=K^s \cup K^u\cup K^c$, where  $K^i\cap K^j=\emptyset$ for $i,j\in \{s,u,c\}$, $i\neq j$. 
Suppose that for each $n\in \N$ there exists a family of projections $P^k_n$, $k\in K$  such that 
\begin{equation}\label{split}
\sum_{k\in K}P^k_n=\Id,
\end{equation}
and 
\begin{displaymath}
P^k_nP^l_n=0 \  \text{ for every } k,l\in K, \ k\neq l.
\end{displaymath}
In particular, by considering $X_k(n)=P^k_n(X)$, we have that
\begin{equation*}
X=\bigoplus_{k\in K} X_k(n) \quad \text{for every $n\in \N$.}
\end{equation*}

\begin{remark}
We observe that the notion of multiscale considered in this work is more general than the one considered by Pilyugin in \cite{Pil}. Indeed, in the aforementioned work there is an extra assumption requiring that $A_nP_n^k=P^k_{n+1}A_n$ for every $n\in \N$ and $k\in K$. Moreover,  in \cite{Pil} $K$ has the form $K=K^s\cup K^u$, i.e. $K^c=\emptyset$. 
\end{remark}

\subsection{Standing assumptions} Given $k\in K^s\cup K^u$, take $\lambda_k>0$ and let $(\mu^k_n)_{n\in \N}$ and $(\nu_n^k)_{n\in \N}$ be sequences of positive numbers such that:
\begin{itemize}
\item for $k\in K^s$,
\begin{equation}\label{eq: multi c1}
\sup_n \sum_{l=1}^n\|\cA(n,l)P^k_l\|\nu^k_l<+\infty, 
\end{equation}
and
\begin{equation}\label{eq: multi c2}
\sup_n \sum_{l=1}^n\|\cA(n,l)P^k_l\|\mu^k_l\leq \lambda_k;
\end{equation}
\item for $k\in K^u$,
\begin{equation}\label{eq: multi c3}
\sup_n \sum_{l=n+1}^{\infty}\|\cA(n,l)P^k_l\|\nu^k_l<+\infty, 
\end{equation}
and
\begin{equation}\label{eq: multi c4}
\sup_n \sum_{l=n+1}^{\infty}\|\cA(n,l)P^k_l\|\mu^k_l\leq \lambda_k.
\end{equation}
\end{itemize}

\subsection{A linearization result} 
We are now ready to state our first main result.

\begin{theorem}\label{theo: lienarization}
Let $f_n \colon X\to X$, $n\in \N$ be a sequence of maps such that $A_n+f_n$ is a homeomorphism for each $n\in \N$ and
\begin{equation}\label{bound of f}
\lVert P^k_n f_{n-1}\rVert_\infty \leq \nu^k_n, 
\end{equation}
for every $k\in K^s\cup K^u$ and $n\geq 1$, where
\[
\lVert P^k_n f_{n-1}\rVert_\infty:=\sup \{ |P^k_n f_{n-1} (x)|: x\in X\}.
\]
Moreover, assume that for each $k\in K^s\cup K^u$, $x, y\in X$ and $n\in \N$,
\begin{equation}\label{Lipschitz}
|P^k_nf_{n-1}(x)-P^k_nf_{n-1}(y)| \le \mu^k_n |x-y|.
\end{equation}
Then, if 
\begin{equation}\label{contr}
\sum_{k\in K^s\cup K^u}\lambda_k <1,
\end{equation}
\begin{itemize}
\item[i)] there exist sequences of continuous maps $H_n:X\to X$, $n\in \N$ and $\tau_n:X\to \bigoplus_{k\in K^c}X_k(n+1)$, $n\in \N$ such that 
\begin{equation}\label{lin1}
H_{n+1}\circ A_n=(A_n+f_n)\circ H_n+\tau_n\circ H_{n}, \ \text{for every $n\in \N$}.
\end{equation}
In addition, 
\[
\sup_{n\in \N}\lVert H_n-\Id \rVert_\infty <+\infty \text{ and } \tau_n(x)=-\sum_{k\in K^c}P^k_{n+1} (f_{n}(x));
\]

\item[ii)] there exist sequences of continuous maps $\bar H_n:X\to X$, $n\in \N$ and $\bar \tau_n:X\to \bigoplus_{k\in K^c}X_k(n+1)$, $n\in \N$ such that   
\begin{equation}\label{lin2}
\bar{H}_{n+1}\circ (A_n+f_n)=A_n \circ \bar{H}_n +\bar{\tau}_n, \  \text{for every $n\in \N$}.
\end{equation}
In addition, 
\[
\sup_{n\in \N}\lVert \bar H_n-\Id \rVert_\infty <+\infty \text{ and } \bar \tau_n(x)= -\tau_n.
\]

\end{itemize}
Moreover, in the case when either $K^c=\emptyset$ or $P^k_nf_{n-1}\equiv 0$ for every $k\in K^c$ and $n\geq 1$,  we have that $H_n$ and $\bar H_n$ are homeomorphisms for each $n\in \N$. In addition,
\begin{equation}\label{eq: H inverse theo1}
H_n\circ \bar H_n=\bar H_n\circ H_n=\Id 
\end{equation}
and 
\begin{equation}\label{eq: linearization theo1}
H_{n+1}\circ A_n=(A_n+f_n)\circ H_n,
\end{equation}
for every $n\in \N$.
\end{theorem}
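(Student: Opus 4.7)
For part (i), the idea is to write $H_n=\Id+h_n^*$ with $h^*=(h_n^*)_{n\in\N}$ the unique fixed point of a contraction $\cG$ on the Banach space $\mathcal F$ of sequences of continuous maps $h_n\colon X\to X$ equipped with $\lVert h\rVert_{\mathcal F}:=\sup_{n\in\N}\lVert h_n\rVert_\infty$. Concretely, set $(\cG h)_n:=\sum_{k\in K^s\cup K^u}(\cG h)_n^k$ where
\[
(\cG h)_n^k(x):=\sum_{l=1}^{n}\cA(n,l)P^k_l f_{l-1}\bigl(\cA(l-1,n)x+h_{l-1}(\cA(l-1,n)x)\bigr)\qquad(k\in K^s),
\]
and the sign-flipped tail $-\sum_{l=n+1}^{\infty}$ of the same summand for $k\in K^u$. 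The bound \eqref{bound of f} combined with \eqref{eq: multi c1} and \eqref{eq: multi c3} gives absolute and uniform convergence, so $\cG$ sends $\mathcal F$ into itself; the Lipschitz estimate \eqref{Lipschitz} combined with \eqref{eq: multi c2} and \eqref{eq: multi c4} then makes $\cG$ Lipschitz with constant at most $\sum_{k\in K^s\cup K^u}\lambda_k$, which is strictly less than $1$ by \eqref{contr}. Using $A_n\cA(n,l)=\cA(n+1,l)$ and $\cA(l-1,n+1)A_n=\cA(l-1,n)$, a telescoping calculation shows the fixed point satisfies $h_{n+1}^*(A_nx)-A_n h_n^*(x)=\sum_{k\in K^s\cup K^u}P^k_{n+1}f_n(x+h_n^*(x))$, and writing $\Id=\sum_k P^k_{n+1}$ this rearranges into \eqref{lin1} with $\tau_n=-\sum_{k\in K^c}P^k_{n+1}f_n$.

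For part (ii), since each $A_n+f_n$ is a homeomorphism the nonlinear cocycle $\phi(m,n,\cdot)$ is well defined for all $m,n\in\N$. Define $\bar H_n:=\Id+\bar h_n$ with
\[
\bar h_n^k(x):=-\sum_{l=1}^{n}\cA(n,l)P^k_l f_{l-1}\bigl(\phi(l-1,n,x)\bigr)\qquad(k\in K^s),
\]
and the corresponding sign-flipped tail $+\sum_{l=n+1}^{\infty}$ for $k\in K^u$. Because $f_{l-1}$ is here evaluated along a fixed orbit of $x$, the uniform bound \eqref{bound of f} alone yields convergence via \eqref{eq: multi c1} and \eqref{eq: multi c3}; no fixed point is required. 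The analogous telescoping, based on the semigroup identity $\phi(l-1,n+1,A_nx+f_n(x))=\phi(l-1,n,x)$, produces \eqref{lin2} with $\bar\tau_n(x)=\sum_{k\in K^c}P^k_{n+1}f_n(x)$, which agrees with $-\tau_n$ pointwise as a function of $x$.

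Under the extra assumption that $\tau_n\equiv\bar\tau_n\equiv 0$, iteration of \eqref{eq: linearization theo1} and \eqref{lin2} yield the orbit identities
\[
\phi(l,n,H_n(x))=H_l(\cA(l,n)x),\qquad \bar H_l(\phi(l,n,x))=\cA(l,n)\bar H_n(x).
\]
Substituting the first identity into the defining series of $\bar h_n$ evaluated at $H_n(x)$ transforms it term by term into the negative of the series defining $h_n(x)$; hence $\bar h_n(H_n(x))=-h_n(x)$ and $\bar H_n\circ H_n=\Id$. For the opposite composition, set $r_n:=H_n\circ\bar H_n-\Id$, which is bounded since $h_n$ and $\bar h_n$ are. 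Using the second orbit identity to rewrite $h_n(\bar H_n(x))$, and then comparing with $-\bar h_n(x)$, the Lipschitz bound \eqref{Lipschitz} together with \eqref{eq: multi c2} and \eqref{eq: multi c4} produces an estimate of the form $\sup_{n,x}|r_n(x)|\le\bigl(\sum_{k\in K^s\cup K^u}\lambda_k\bigr)\sup_{n,x}|r_n(x)|$, and \eqref{contr} then forces $r\equiv 0$.

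The main technical obstacle is the absence of any invariance of the projections $P^k_n$ under the linear cocycle: unlike in Pilyugin's framework we do not assume $A_nP^k_n=P^k_{n+1}A_n$, so the subspaces $X_k(n)$ are not preserved by $\cA(m,n)$. Consequently every estimate must be phrased through the one-sided quantities $\lVert\cA(n,l)P^k_l\rVert$ that appear in \eqref{eq: multi c1}--\eqref{eq: multi c4}, and all the formulas above are carefully arranged so that each projection $P^k_l$ sits immediately to the right of the cocycle factor $\cA(n,l)$ and is never transported through it.
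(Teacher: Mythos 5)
Your argument follows the same route as the paper's proof: part (i) via the contraction $\cG$ (the paper's $\mathcal T$) on the space of bounded sequences of continuous maps and the telescoping identity $A_n\cA(n,l)=\cA(n+1,l)$; part (ii) via the explicit (non--fixed-point) series for $\bar h_n$ built from the nonlinear cocycle; and the invertibility in two steps, direct term-by-term cancellation for $\bar H_n\circ H_n=\Id$ followed by the contraction-type estimate $\|\mathbf H\circ\bar{\mathbf H}-\Id\|\le\bigl(\sum_k\lambda_k\bigr)\|\mathbf H\circ\bar{\mathbf H}-\Id\|$ for the other composition. Your closing remark about never transporting $P^k_l$ through the cocycle correctly identifies why the hypotheses are stated as one-sided bounds on $\|\cA(n,l)P^k_l\|$, which is exactly the point of departure from Pilyugin's commuting-projection setting.
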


\begin{remark}
We observe that in the case when we have a good control of the perturbations along all the directions (i.e., $K^c=\emptyset$ or $P^k_nf_{n-1}\equiv 0$ for every $k\in K^c$ and $n\geq 1$), the previous result gives us a nonautonomous version of Grobman-Hartman's theorem. In the general case, however, we obtain  a ``quasi-conjugacy" between systems \eqref{eq: linear system} and \eqref{eq: nonlinear system}, i.e. those are conjugated except for a given deviation (the factors $\tau_n$ and $\bar \tau_n$  in \eqref{lin1} and \eqref{lin2}, respectively) in the directions along which we do not have any control.
\end{remark}

\begin{remark}
Another important observation is the generality of Theorem \ref{theo: lienarization}: we do not impose any condition on the linear maps $(A_n)_{n\in \N}$ but rather  only on the allowed perturbations. Moreover, we allow for different levels of control on the perturbations along  different directions. In particular, it generalizes previous results such as \cite[Theorem 2.1]{CGR}.
\end{remark}

\begin{remark}\label{remm}
We note that the classical Palmer's theorem~\cite{AW, Palmer} corresponds to the particular case when:
\begin{itemize}
\item  $|K^s|=|K^u|=1$ and $K^c=\emptyset$;
\item there exist $D, \lambda >0$ such that \[ \| \cA(m, n)P_n\| \le De^{-\lambda (m-n)} \quad \text{for $m\ge n$},
\]
 and \[ \| \cA(m,n)\tilde P_n\| \le De^{-\lambda (n-m)} \quad  \text{for $m\le n$,} \]
where $P_n=P_n^a$ and $\tilde P_n=P_n^b$ for $n\in \N$, $K^s=\{a\}$, $K^u=\{b\}$.
\end{itemize}
It is easy to verify that in this setting Theorem~\ref{theo: lienarization} is applicable whenever $(\nu_l^k)$ and $(\mu_l^k)$ are constant sequences, and $\mu_l^k$ is sufficiently small.
\end{remark}

\begin{remark}
Let $K^s$ and $K^u$ satisfy the same properties as in Remark~\ref{remm}. 
We emphasize that in the case when $|K^c|=1$, 
a result similar to Theorem~\ref{theo: lienarization} was established (by using different techniques and under some additional assumptions) in~\cite[Theorem 3]{BD}. 
\end{remark}

As an illustration of the broad applicability of Theorem \ref{theo: lienarization} we provide the following simple example.
\begin{example}\label{EXM}
Take $X=\R^5$ and $K=\{1,2,3,4,5\}$. For each $k\in K$ and $n\in \N$, let $P^k_n$ be the projection onto the $k^{th}$ coordinate. Moreover, let $(A_n)_{n\in\N}$ be a sequence of constant diagonal matrices given by
\[
A_n=\text{diag}\left(\frac{1}{2},1,1,1,2\right) \text{ for every } n\in \N,
\]
and consider $K^s=\{1,2\}$, $K^c=\{3\}$ and $K^u=\{4,5\}$. Take $\lambda_k=\frac{1}{5}$ for every $k\in K$ and
\begin{itemize}
\item $\nu^k_n=1$ and $\mu^k_n=\frac{1}{10}$, for $k\in \{1,5\}$ and  $n\in \N$;
\item $\nu^k_n=\frac{1}{2^{n}}$ and $\mu^k_n=\frac{1}{5\cdot 2^{n}}$, for $k\in \{2,4\}$ and $n\in \N$.
\end{itemize}
Let $f_n\colon X\to X$, $f_n=(f_n^1, \ldots, f_n^5)$, $n\in \N$ be a sequence of continuous maps such that 
\begin{itemize}
\item $\|f_{n-1}^k\|_\infty \le 1$ and $Lip(f_{n-1}^k)\le \frac{1}{10}$, for $k\in \{1,5\}$ and $n\ge 1$;
\item $\|f_{n-1}^k\|_\infty \le  \frac{1}{2^{n}}$ and $Lip(f_{n-1}^k)\le \frac{1}{5\cdot 2^{n}}$, for $k\in \{2, 4\}$ and $n\ge 1$.
\end{itemize}
It is easy to verify that under the above assumptions,  Theorem \ref{theo: lienarization} is applicable. Observe the different levels of control we allow along each direction: the more ``hyperbolic" a direction is, the less restrictive are the conditions on the perturbations along such a direction.  
\end{example}

\subsection{Proof of Theorem \ref{theo: lienarization}}
In this subsection we present the proof of Theorem \ref{theo: lienarization}.  For the sake of clarity, we will divide it into several steps. 

Let $\mathcal Y$ denote the space of all sequences $\mathbf h=(h_n)_{n\in \N}$ of continuous maps $h_n:X\to X$ such that
\[
\lVert \mathbf h\rVert_{\mathcal Y} :=\sup_{n\in \N} \lVert h_n\rVert_\infty <+\infty. 
\]
It is easy to verify that $(\mathcal Y, \lVert \cdot \rVert_{\mathcal Y})$ is a Banach space. 

\subsection*{Construction of maps $H_n$:} 
Let us consider the operator $\mathcal T\colon \mathcal Y \to \mathcal Y$ given by
\begin{equation*}
(\mathcal T\mathbf h)_n (x) = \sum_{k\in K^s\cup K^u}(\mathcal T_k\mathbf h)_n (x),
\end{equation*}
where 
\begin{itemize}
\item for $k\in K^s$, we set $(\mathcal T_k\mathbf h)_0 (x)=0 $ and 
\begin{equation*}
(\mathcal T_k\mathbf h)_n (x) = \sum_{l=1}^n \cA(n, l)P^k_l (f_{l-1}(\cA(l-1, n)x+h_{l-1}(\cA(l-1, n)x))),
\end{equation*}
for $n\geq 1$;
\item for $k\in K^u$, we define
\begin{equation*}
(\mathcal T_k\mathbf h)_n (x) =-\sum_{l=n+1}^\infty \cA(n, l)P^k_l(f_{l-1}(\cA(l-1, n)x+h_{l-1}(\cA(l-1, n)x))),
\end{equation*}
for every $n\in \N$ and $x\in X$.
\end{itemize}

Since $P^k_l=P_l^k P^k_l$, we have that 
\begin{equation*}
\begin{split}
|(\mathcal T\mathbf h)_n (x)| &\leq \sum_{k\in K^s\cup K^u}|(\mathcal T_k\mathbf h)_n (x) |\\
&\leq \sum_{k\in K^s}|(\mathcal T_k\mathbf h)_n (x)|+ \sum_{k\in K^u}|(\mathcal T_k\mathbf h)_n (x)|\\
&\leq \sum_{k\in K^s} \sum_{l=1}^n \|\cA(n, l)P^k_l\|\cdot  \|P^k_lf_{l-1}\|_\infty\\
&\phantom{=}+\sum_{k\in K^u}\sum_{l=n+1}^\infty \|\cA(n, l)P^k_l\| \cdot \|P^k_lf_{l-1}\|_\infty,\\
\end{split}
\end{equation*}
which combined with \eqref{eq: multi c1}, \eqref{eq: multi c3} and \eqref{bound of f} implies that
\begin{equation*}
\begin{split}
\sup_{n\in \N}\|(\mathcal T\mathbf h)_n \|_\infty &<+\infty.\\
\end{split}
\end{equation*}
Moreover, one can easily see that for every $\mathbf{h}\in \mathcal Y$ and $n\in \N$, $(\mathcal{T}\mathbf{h})_n$ is continuous. 
Hence, $\mathcal T\colon \mathcal Y \to \mathcal Y$ is well-defined. We  now claim that $\mathcal T\colon \mathcal Y \to \mathcal Y$ is a contraction. Indeed, take $\mathbf h^i=(h_n^i)_{n\in \Z} \in \mathcal Y$, $i=1, 2$. Using~\eqref{eq: multi c2} and \eqref{Lipschitz} we get that for each $k\in K^s$,
\[
\begin{split}
| (\mathcal T_k\mathbf h^1)_n (x)-(\mathcal T_k\mathbf h^2)_n (x)| &\leq \sum_{l=1}^n \|\cA(n,l)P^k_l\|\mu^k_l \lVert h_{l-1}^1-h_{l-1}^2\rVert_\infty \\
&\leq \lambda_k \lVert \mathbf h^1-\mathbf h^2\rVert_{\mathcal{Y}}, \\
\end{split}
\]
for $x\in X$ and $n\in \N$. Similarly, for each $k\in K^u$,
\[
\begin{split}
| (\mathcal T_k\mathbf h^1)_n (x)-(\mathcal T_k\mathbf h^2)_n (x)| &\leq \sum_{l=n+1}^\infty \|\cA(n,l)P^k_l\|\mu^k_l \lVert h_{l-1}^1-h_{l-1}^2\rVert_\infty \\
&\leq \lambda_k \lVert \mathbf h^1-\mathbf h^2\rVert_{\mathcal{Y}}, \\
\end{split}
\]
for $x\in X$ and $n\in \N$.
Consequently,
\begin{displaymath}
\begin{split}
| (\mathcal T\mathbf h^1)_n (x)-(\mathcal T\mathbf h^2)_n (x)| &\leq \sum_{k\in K^s\cup K^u}\lambda_k \lVert \mathbf h^1-\mathbf h^2\rVert_{\mathcal{Y}}  \\
\end{split}
\end{displaymath}
for every $x\in X$ and $n\in \N$ and thus,
\begin{displaymath}
\begin{split}
\lVert \mathcal T\mathbf h^1-\mathcal T\mathbf h^2 \rVert_{\mathcal{Y}} &\leq \sum_{k\in K^s\cup K^u}\lambda_k \lVert \mathbf h^1-\mathbf h^2\rVert_{\mathcal{Y}}.  \\
\end{split}
\end{displaymath}
Hence, by~\eqref{contr} we conclude that $\mathcal T$ is a contraction. Therefore, $\mathcal T$ has a unique fixed point $\mathbf h=(h_n)_{n\in \N}\in \mathcal Y$.  Thus, we have that
\begin{equation}\label{eq: aux conjd}
h_{n+1}(A_n x) =(\mathcal T \mathbf h)_{n+1}(A_n x) =\sum_{k\in K^s\cup K^u}(\mathcal T_k\mathbf h)_{n+1} (A_n x),
\end{equation}
for $x\in X$ and $n\in \N$.
Now, for $k\in K^s$, $x\in X$ and $n\geq 1$, we have that 
\begin{displaymath}
\begin{split}
&(\mathcal T_k\mathbf h)_{n+1} (A_n x)\\
&= \sum_{l=1}^{n+1} \cA(n+1, l)P^k_l (f_{l-1}(\cA(l-1, n+1)A_nx+h_{l-1}(\cA(l-1, n+1)A_nx)))\\
&=\sum_{l=1}^{n+1} \cA(n+1, l)P^k_l (f_{l-1}(\cA(l-1, n)x+h_{l-1}(\cA(l-1, n)x)))\\
&=A_n\sum_{l=1}^{n} \cA(n, l)P^k_l (f_{l-1}(\cA(l-1, n)x+h_{l-1}(\cA(l-1, n)x)))+ P^k_{n+1} (f_{n}(x+h_{n}(x))) \\
&=A_n(\mathcal T_k\mathbf h)_{n} ( x)+ P^k_{n+1} (f_{n}(x+h_{n}(x))).
\end{split}
\end{displaymath}
Similarly, for $k\in K^s$, $x\in X$ and $n=0$, we observe that
\begin{displaymath}
(\mathcal T_k\mathbf h)_{1} (A_0 x)=P^k_{1} (f_{0}(x+h_{0}(x)))=A_n(\mathcal T_k\mathbf h)_{0} ( x)+P^k_{1} (f_{0}(x+h_{0}(x))).
\end{displaymath}
Finally, for $k\in K^u$, $x\in X$ and $n\in \N$, we have that 
\begin{displaymath}
\begin{split}
&(\mathcal T_k\mathbf h)_{n+1} (A_n x)\\
&= -\sum_{l=n+2}^{\infty} \cA(n+1, l)P^k_l (f_{l-1}(\cA(l-1, n+1)A_nx+h_{l-1}(\cA(l-1, n+1)A_nx)))\\
&=-\sum_{l=n+2}^{\infty} \cA(n+1, l)P^k_l (f_{l-1}(\cA(l-1, n)x+h_{l-1}(\cA(l-1, n)x)))\\
&=-A_n\sum_{l=n+1}^{\infty} \cA(n, l)P^k_l (f_{l-1}(\cA(l-1, n)x+h_{l-1}(\cA(l-1, n)x)))+ P^k_{n+1} (f_{n}(x+h_{n}(x)))\\
&=A_n(\mathcal T_k\mathbf h)_{n} ( x)+ P^k_{n+1} (f_{n}(x+h_{n}(x))).
\end{split}
\end{displaymath}

Combining these observations with \eqref{split}, \eqref{eq: aux conjd} and the fact that $\mathbf h$ is a fixed point of $\mathcal T$, we obtain that
\[
h_{n+1}(A_n x) =A_n h_n(x)+f_n(x+h_n(x))-\sum_{k\in K^c}P^k_{n+1} (f_{n}(x+h_{n}(x))),
\]
for $n\in \N$ and $x\in X$. Consequently, defining $H_n=\Id+h_n$, $n\in \N$, and $\tau_n:X\to \bigoplus_{k\in K^c}X_k(n+1)$ by $\tau_n(x)=-\sum_{k\in K^c}P^k_{n+1} (f_{n}(x))$ for $n\in \N$, we get that~\eqref{lin1} holds.

\subsection*{Construction of maps  $\bar{H}_n$:}
We now consider $\mathbf{\bar h}=(\bar h_n)_{n\in \N} \in \mathcal Y$ given by 
\begin{equation*}
\bar h_n (x) = \sum_{k\in K^s\cup K^u}\bar h^k_n (x) ,
\end{equation*}
where
\begin{itemize}
\item for $k\in K^s$, we set $\bar h^k_0 (x)=0$ and 
\begin{displaymath}
\bar h^k_n (x)= -\sum_{l=1}^{n} \cA(n, l)P^k_lf_{l-1}(\mathcal F(l-1, n)x) \quad \text{for $n\ge 1$,}
\end{displaymath}
where
\[
\mathcal F(m, n)=\begin{cases}
F_{m-1}\circ \ldots F_n & \text{for $m>n$;}\\
\Id & \text{for $m=n$;}\\
F_{m+1}^{-1}\circ \ldots \circ F_n^{-1} & \text{for $m<n$,}
\end{cases}
\]
and $F_n=A_n+f_n$, $n\in \N$;
\item  for $k\in K^u$ and $n\in \N$,
\begin{displaymath}
\bar h^k_n (x)= \sum_{l=n+1}^{\infty} \cA(n, l)P^k_lf_{l-1}(\mathcal F(l-1, n)x).
\end{displaymath}

\end{itemize}
It follows easily from \eqref{eq: multi c1}, \eqref{eq: multi c3} and \eqref{bound of f} that indeed $ \mathbf{\bar h}\in \mathcal Y$. Moreover, we observe that given $x\in X$ and $k\in K^s$, we have that
\[
\begin{split}
&\bar{h}^k_{n+1}(F_n(x))  \\
&=-\sum_{l=1}^{n+1} \cA(n+1, l)P^k_lf_{l-1}(\mathcal F(l-1, n+1)F_n(x))\\
&=-\sum_{l=1}^{n+1} \cA(n+1, l)P^k_lf_{l-1}(\mathcal F(l-1, n)x)\\
&=-A_n\sum_{l=1}^{n} \cA(n, l)P^k_lf_{l-1}(\mathcal F(l-1, n)x)-P^k_{n+1}f_n(x)\\
&=A_n\bar{h}^k_n(x)-P^k_{n+1}f_n(x),\\
\end{split}
\]
for $n\geq 1$. Similarly, for $x\in X$ and $n=0$,  we observe that
\[
\begin{split}
\bar{h}^k_{1}(F_0(x))=-P^k_1f_{0}(x)=A_0\bar{h}^k_0(x)-P^k_{1}f_0(x).
\end{split}
\]
Moreover, for $k\in K^u$, $x\in X$ and $n\in \N$, we have that 
\[
\begin{split}
&\bar{h}^k_{n+1}(F_n(x))  \\
&=\sum_{l=n+2}^{\infty} \cA(n+1, l)P^k_lf_{l-1}(\mathcal F(l-1, n+1)F_n(x))\\
&=\sum_{l=n+2}^{\infty} \cA(n+1, l)P^k_lf_{l-1}(\mathcal F(l-1, n)x)\\
&=A_n\sum_{l=n+1}^{\infty} \cA(n, l)P^k_lf_{l-1}(\mathcal F(l-1, n)x)-P^k_{n+1}f_n(x)\\
&=A_n\bar{h}^k_n(x)-P^k_{n+1}f_n(x).\\
\end{split}
\]
Consequently, using \eqref{split} and recalling the definition of $\bar{\mathbf{h}}$, it follows that for every $n\in \N$ and $x\in X$,
\begin{displaymath}
\bar h_{n+1}(F_n(x))=A_n\bar{h}_n(x)-f_n(x)+\sum_{k\in K^c}P^k_{n+1}f_{n}(x).
\end{displaymath}
Thus, defining $\bar{H}_n=\Id+\bar{h} _n$, $n\in \N$, and $\bar \tau_n:X\to \bigoplus_{k\in K^c}X_k(n+1)$ by $\bar \tau_n(x)=\sum_{k\in K^c}P^k_{n+1} (f_{n}(x))$ for $n\in \N$, we conclude that~\eqref{lin2} holds.

\subsection*{The cases when $K^c=\emptyset$ and $P_n^kf_{n-1}\equiv 0$ for every $k\in K^c$:}
Suppose that either $K^c=\emptyset$ or $P_n^kf_{n-1}\equiv 0$ for every $k\in K^c$ and $n\in \N$. Hence, we have that  $\tau_n=\overline{\tau}_n=0$ for every $n\in \N$. In particular, \eqref{lin1} and \eqref{lin2} imply that
\begin{equation}\label{eq: conjug proof}
H_{n+1}\circ A_n=(A_n+f_n)\circ H_n \text{ and } \bar{H}_{n+1}\circ (A_n+f_n)=A_n \circ \bar{H}_n,
\end{equation}
for every $n\in \N$. Hence, \eqref{eq: linearization theo1} holds. Moreover,
 it follows easily from~\eqref{eq: conjug proof} that
\begin{equation}\label{eq: Hn sol of Fn}
H_n(\cA(n,m)x)=\mathcal F(n,m)H_m(x)
\end{equation}
and
\begin{equation}\label{eq: barHn sol of An}
\bar H_n(\mathcal F(n,m)x)=\cA(n,m)\bar H_m(x),
\end{equation}
for every $m,n\in\N$.

Recalling the definitions of $\bar H_n$ and $H_n$ we get that for every $n\geq 1$ and $x\in X$,
\begin{equation}\label{2153}
\begin{split}
\bar H_n(H_{n}(x))&=H_{n}(x)+\bar h_n(H_{n}(x))\\
&=x +h_{n}(x)+\bar h_n(H_{n}(x))\\
&=x +\sum_{k\in K^s}\sum_{l=1}^n \cA(n, l)P^k_l (f_{l-1}(\cA(l-1, n)x+h_{l-1}(\cA(l-1, n)x)))\\
&\phantom{=} -\sum_{k\in K^u}\sum_{l=n+1}^\infty \cA(n, l)P^k_l(f_{l-1}(\cA(l-1, n)x+h_{l-1}(\cA(l-1, n)x)))\\
&\phantom{=} -\sum_{k\in K^s}\sum_{l=1}^{n} \cA(n, l)P^k_lf_{l-1}(\mathcal F(l-1, n)H_{n}(x))\\
&\phantom{=}+ \sum_{k\in K^u}\sum_{l=n+1}^{\infty} \cA(n, l)P^k_lf_{l-1}(\mathcal F(l-1, n)H_{n}(x)).\\
\end{split}
\end{equation}
Now, by \eqref{eq: Hn sol of Fn} it follows that
\begin{equation*}
\begin{split}
\mathcal F(l-1, n)H_{n}(x)&=H_{l-1}(\cA(l-1,n)x)\\
&=\cA(l-1,n)x+h_{l-1}(\cA(l-1,n)x),
\end{split}
\end{equation*}
which combined with~\eqref{2153} implies that $\bar H_n(H_{n}(x))=x$ for every $x\in X$. The case when $n=0$ can be treated similarly.

Our objective now is to show that $ H_n(\bar H_{n}(x))=x$ for every $x\in X$ and $n\in \N$. We start by observing that
\begin{equation*}
\begin{split}
 H_n(\bar H_{n}(x))&=\bar H_{n}(x)+ h_n(\bar H_{n}(x))\\
&=x +\bar h_{n}(x)+ h_n(\bar H_{n}(x)).\\
\end{split}
\end{equation*}
Consequently,
\begin{equation}\label{eq: aux Hn barHn}
\begin{split}
 H_n(\bar H_{n}(x))-x=\bar h_{n}(x)+ h_n(\bar H_{n}(x)).
\end{split}
\end{equation}
By analyzing the right-hand side of~\eqref{eq: aux Hn barHn},  we have that 
\begin{equation*}
\begin{split}
&\bar h_{n}(x)+ h_n(\bar H_{n}(x))\\
&=-\sum_{k\in K^s}\sum_{l=1}^{n} \cA(n, l)P^k_lf_{l-1}(\mathcal F(l-1, n)x)\\
&\phantom{=}+ \sum_{k\in K^u}\sum_{l=n+1}^{\infty} \cA(n, l)P^k_lf_{l-1}(\mathcal F(l-1, n)x)\\
&\phantom{=}+\sum_{k\in K^s}\sum_{l=1}^n \cA(n, l)P^k_l (f_{l-1}(\cA(l-1, n)\bar H_{n}(x)+h_{l-1}(\cA(l-1, n)\bar H_{n}(x))))\\
&\phantom{=} -\sum_{k\in K^u}\sum_{l=n+1}^\infty \cA(n, l)P^k_l(f_{l-1}(\cA(l-1, n)\bar H_{n}(x)+h_{l-1}(\cA(l-1, n)\bar H_{n}(x)))),\\
\end{split}
\end{equation*}
for $x\in X$ and $n\ge 1$.
On the other hand, by using \eqref{eq: barHn sol of An} we have that
\begin{equation*}
\begin{split}
\cA(l-1, n)\bar H_{n}(x)+h_{l-1}(\cA(l-1, n)\bar H_{n}(x))&=H_{l-1}(\cA(l-1, n)\bar H_{n}(x))\\
&=H_{l-1}(\bar H_{l-1}(\mathcal F(l-1,n)x)).
\end{split}
\end{equation*}
Thus, combining the previous observations we get that
\begin{equation*}
\begin{split}
&|\bar h_{n}(x)+ h_n(\bar H_{n}(x))|\\
&\leq\sum_{k\in K^s}\sum_{l=1}^{n}  \|\cA(n, l)P^k_l\| \cdot |P^k_lf_{l-1}(H_{l-1}(\bar H_{l-1}(\mathcal F(l-1,n)x)))-P^k_lf_{l-1}(\mathcal F(l-1, n)x)|\\
&\phantom{=}+ \sum_{k\in K^u}\sum_{l=n+1}^{\infty}  \|\cA(n, l)P^k_l\| \cdot  |P^k_lf_{l-1}(\mathcal F(l-1, n)x)-P^k_lf_{l-1}(H_{l-1}(\bar H_{l-1}(\mathcal F(l-1,n)x)))|\\
&\leq \sum_{k\in K^s}\sum_{l=1}^{n} \|\cA(n, l)P^k_l\|\mu^k_{l-1}|H_{l-1}(\bar H_{l-1}(\mathcal F(l-1,n)x)))-\mathcal F(l-1, n)x|\\
&\phantom{=}+ \sum_{k\in K^u}\sum_{l=n+1}^{\infty} \|\cA(n, l)P^k_l\|\mu^k_{l-1}|H_{l-1}(\bar H_{l-1}(\mathcal F(l-1,n)x)))-\mathcal F(l-1, n)x|.\\
\end{split}
\end{equation*}
Therefore, using \eqref{eq: aux Hn barHn} it follows that
\begin{equation}\label{2203}
\begin{split}
& |H_n(\bar H_{n}(x))-x|\\
&\leq \sum_{k\in K^s}\sum_{l=1}^{n} \|\cA(n, l)P^k_l\|\mu^k_{l-1}|H_{l-1}(\bar H_{l-1}(\mathcal F(l-1,n)x)))-\mathcal F(l-1, n)x|\\
&\phantom{=}+ \sum_{k\in K^u}\sum_{l=n+1}^{\infty}  \|\cA(n, l)P^k_l|\mu^k_{l-1}|H_{l-1}(\bar H_{l-1}(\mathcal F(l-1,n)x)))-\mathcal F(l-1, n)x|.\\
\end{split}
\end{equation}
Now, since $\textbf{h}=(h_n)_{n\in\N}\in \mathcal Y$ and $\bar{\textbf{h}}=(\bar{h}_n)_{n\in\N}\in \mathcal Y$, it follows by \eqref{eq: aux Hn barHn} that $\textbf{H}\circ \bar{\textbf{H}}-\Id:=(H_n\circ \bar{H}_n-\Id)_{n\in\N}\in \mathcal Y$,
which combined with \eqref{eq: multi c2}, \eqref{eq: multi c4} and~\eqref{2203} implies that
\begin{displaymath}
\|\textbf{H}\circ \bar{\textbf{H}}-\Id\|_{\mathcal{Y}}\leq \sum_{k\in K^s\cup K^u}\lambda_k\|\textbf{H}\circ \bar{\textbf{H}}-\Id\|_{\mathcal{Y}}.
\end{displaymath}
Thus, from~\eqref{contr} it follows that $\|\textbf{H}\circ \bar{\textbf{H}}-\Id\|_{\mathcal{Y}}=0$, and consequently $H_n(\bar H_{n}(x))=x$ for every $x\in X$ and $n\in \N$.

Summarizing, we have proved that~\eqref{eq: H inverse theo1} holds. In particular, we conclude that 
 $H_n$ and $\bar H_n$ are homeomorphisms for each $n\in \N$. The proof of Theorem \ref{theo: lienarization} is completed.

\section{The case of continuous time}\label{CT}
The purpose of this section is to establish the version of Theorem~\ref{theo: lienarization} for the case of continuous time. Let $A\colon [0, \infty)\to \mathcal B(X)$ and $f\colon [0, \infty)\times X\to X$ be continuous maps. We consider the associated semilinear differential equation
\begin{equation}\label{sde}
x'=A(t)x+f(t,x) \quad t\ge 0,
\end{equation}
as well as the associated linear equation
\begin{equation}\label{lde}
x'=A(t)x \quad t\ge 0.
\end{equation}
By $T(t,s)$ we will denote the evolution family associated to~\eqref{lde}. Furthermore, $U(t,s)$ will denote the nonlinear evolution family corresponding to~\eqref{sde}, i.e. $U(t,s)v=x(t)$, where $x\colon [0, \infty)\to X$ is the solution of~\eqref{sde} such that 
$x(s)=v$.

Let $K$ be as in Subsection~\ref{Multi}. Suppose that for each $t\ge 0$ and $k\in K$ there is a projection $P^k(t)$ on $X$ such that:
\begin{itemize}
\item $\sum_{k\in K} P^k(t)=\Id$;
\item $P^k(t)P^l(t)=0$ for $k, l\in K$, $k\neq l$;
\item for $k\in K$, $t\mapsto P^k(t)$ is measurable.
\end{itemize}
Furthermore, we assume that there are Borel measurable functions $\mu^k, \nu^k \colon [0, \infty)\to [0, \infty)$ and positive numbers $\lambda_k>0$, $k\in K^s \cup K^u$ such that:
\begin{itemize}
\item for $k\in K^s$, 
\begin{equation}\label{c1}
\sup_t \int_0^t\|T(t,s)P^k(s)\|\nu^k(s)\, ds<+\infty,
\end{equation}
and
\begin{equation}\label{c2}
\sup_t \int_0^t\|T(t,s)P^k(s)\|\mu^k(s)\, ds\le \lambda_k;
\end{equation}
\item for $k\in K^u$, 
\begin{equation}\label{c3}
\sup_t \int_t^\infty \|T(t,s)P^k(s)\|\nu^k(s)\, ds<+\infty,
\end{equation}
and
\begin{equation}\label{c4}
\sup_t \int_t^\infty \|T(t,s)P^k(s)\|\mu^k(s)\, ds\le \lambda_k.
\end{equation}
\end{itemize}
The following is the version of Theorem~\ref{theo: lienarization} in the present setting.
\begin{theorem}\label{THM2}
Assume that the following conditions hold:
\begin{itemize}
\item for $k\in K^s\cup K^u$ and $t\ge 0$,
\begin{equation}\label{c1c}
\|P^k(t)f(t, \cdot)\|_\infty \le \nu^k(t);
\end{equation}
\item for $k\in K^s\cup K^u$, $t\ge 0$ and $x, y\in X$,
\begin{equation}\label{c2c}
|P^k(t)f(t, x)-P^k(t)f(t, y)| \le \mu^k(t)|x-y|;
\end{equation}
\item \eqref{contr} holds.
\end{itemize}
Then, 
\begin{itemize}
\item there exists a continuous map $H\colon [0, \infty)\times X\to X$ such that if $t\mapsto x(t)$ is a solution of~\eqref{lde}, then $t\mapsto H(t, x(t))$ is a solution of
\begin{equation}\label{sde1}
x'=A(t)x+\sum_{k\in K^s\cup K^u} P^k(t)f(t,x);
\end{equation}
\item there exists a continuous map $\bar H\colon [0, \infty)\times X\to X$ such that if $t\mapsto y(t)$ is a solution of~\eqref{sde}, then $t\mapsto \bar H(t, y(t))$ is a solution of
\begin{equation}\label{sde2}
x'=A(t)x+\sum_{k\in K^c} P^k(t)f(t,y(t)));
\end{equation}
\item we have that
\begin{equation}\label{Bound}
\sup_{t} \|H(t, \cdot)-\Id \|_\infty<+\infty \quad \text{and} \quad \sup_{t} \|\bar H(t, \cdot)-\Id\|_\infty<+\infty.
\end{equation}
\end{itemize}
Moreover, in the case when $K^c=\emptyset$ or $P^k(t)f(t, \cdot)\equiv 0$ for $t\ge 0$ and $k\in K^c$, then $H(t, \cdot)$ and $\bar H(t, \cdot)$ are homeomorphisms for each $t\ge 0$ satisfying
\begin{equation}\label{HH}
H(t, \bar H(t, x))=\bar H(t, H(t, x))=x,
\end{equation}
\begin{equation}\label{213}
H(t, T(t,s)x)=U(t,s)H(s, x) \quad \text{and} \quad \bar H(t, U(t,s)x)=T(t,s)\bar H(s,x),
\end{equation}
for $t, s\ge 0$ and $x\in X$.
\end{theorem}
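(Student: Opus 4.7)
The plan is to mirror the proof of Theorem~\ref{theo: lienarization} essentially verbatim, replacing sums over $l\in\N$ by Lebesgue integrals over $[0,\infty)$ and the discrete cocycle $\cA$ by the linear evolution family $T$. Let $\mathcal{Y}$ be the Banach space of continuous maps $h\colon[0,\infty)\times X\to X$ with $\|h\|_{\mathcal{Y}}:=\sup_{t\ge 0}\|h(t,\cdot)\|_\infty<+\infty$. Define $\mathcal{T}\colon\mathcal{Y}\to\mathcal{Y}$ by $(\mathcal{T}h)(t,x)=\sum_{k\in K^s\cup K^u}(\mathcal{T}_kh)(t,x)$, where
\[
(\mathcal{T}_k h)(t,x)=\int_0^t T(t,s)P^k(s)f\bigl(s,\,T(s,t)x+h(s,T(s,t)x)\bigr)\,ds \qquad (k\in K^s),
\]
with $-\int_t^\infty$ in place of $\int_0^t$ for $k\in K^u$. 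Conditions~\eqref{c1}, \eqref{c3} and~\eqref{c1c} yield $\mathcal{T}(\mathcal{Y})\subset\mathcal{Y}$ (continuity follows from dominated convergence, using measurability of $s\mapsto P^k(s)$), while \eqref{c2}, \eqref{c4}, \eqref{c2c} together with~\eqref{contr} make $\mathcal{T}$ a contraction. Denoting its unique fixed point by $h$, I set $H(t,x):=x+h(t,x)$; the bound in~\eqref{Bound} for $H$ is immediate.

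To check that $t\mapsto H(t,T(t,0)x_0)$ solves~\eqref{sde1}, I substitute $x=T(t,0)x_0$ in the fixed-point equation and use $T(s,t)T(t,0)=T(s,0)$ to simplify the inner argument to $H(s,T(s,0)x_0)$, obtaining
\[
H(t,T(t,0)x_0)=T(t,0)x_0+\sum_{k\in K^s}\int_0^t T(t,s)P^k(s)f(s,H(s,T(s,0)x_0))\,ds-\sum_{k\in K^u}\int_t^\infty T(t,s)P^k(s)f(s,H(s,T(s,0)x_0))\,ds.
\]
Differentiating in $t$ via the variation-of-constants formula yields~\eqref{sde1}: the endpoint contributions combine to $\sum_{k\in K^s\cup K^u}P^k(t)f(t,H(t,T(t,0)x_0))$ and the $A(t)\cdot$-terms sum to $A(t)H(t,T(t,0)x_0)$. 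The map $\bar H$ is defined directly (not as a fixed point) by $\bar H(t,x)=x+\bar h(t,x)$ with
\[
\bar h(t,x)=-\sum_{k\in K^s}\int_0^t T(t,s)P^k(s)f(s,U(s,t)x)\,ds+\sum_{k\in K^u}\int_t^\infty T(t,s)P^k(s)f(s,U(s,t)x)\,ds.
\]
Along a trajectory $y(t)=U(t,0)y_0$ of~\eqref{sde}, $U(s,t)y(t)=y(s)$ makes $\bar h(t,y(t))$ depend only on $y(\cdot)$, and an analogous Duhamel computation gives~\eqref{sde2}. The bound in~\eqref{Bound} for $\bar H$ is again immediate.

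When $K^c=\emptyset$ or $P^k(t)f(t,\cdot)\equiv 0$ for $k\in K^c$, the correction terms in~\eqref{sde1}-\eqref{sde2} vanish; combined with uniqueness of solutions to~\eqref{sde}, this gives~\eqref{213}. To establish~\eqref{HH} I would follow the discrete strategy: starting from $H(t,\bar H(t,x))-x=\bar h(t,x)+h(t,\bar H(t,x))$, substitute the defining integrals and use~\eqref{213} to rewrite the inner expression $T(s,t)\bar H(t,x)+h(s,T(s,t)\bar H(t,x))$ as $H(s,\bar H(s,U(s,t)x))$. The Lipschitz hypothesis~\eqref{c2c}, combined with~\eqref{c2} and~\eqref{c4}, then yields
\[
\|\mathbf{H}\circ\bar{\mathbf{H}}-\Id\|_{\mathcal{Y}}\le\Bigl(\sum_{k\in K^s\cup K^u}\lambda_k\Bigr)\|\mathbf{H}\circ\bar{\mathbf{H}}-\Id\|_{\mathcal{Y}},
\]
which forces the norm to vanish by~\eqref{contr}; the reverse composition $\bar H\circ H=\Id$ is symmetric. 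The main technical point I expect to need care with is the differentiation under the integral sign for the improper tails $\int_t^\infty\cdots\,ds$ associated to $k\in K^u$: integrability of $s\mapsto T(t,s)P^k(s)f(s,\cdot)$ uniformly in $t$ on compact intervals does follow from~\eqref{c3} and~\eqref{c1c}, but it is cleaner to first establish the integral (mild) form of~\eqref{sde1}-\eqref{sde2} directly from the defining integrals and the cocycle property of $T$, and then deduce the differential form from it rather than differentiating under the integral by hand.
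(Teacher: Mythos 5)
Your proposal mirrors the paper's proof of Theorem~\ref{THM2} essentially exactly: the same Banach space $\mathcal Y$, the same contraction $\mathcal T$ (built from conditions~\eqref{c1}--\eqref{c4}, \eqref{c1c}, \eqref{c2c}), the same explicit formula for $\bar h$ via the nonlinear evolution family $U$, and the same fixed-point argument for~\eqref{HH} using the conjugacy~\eqref{213} and the Lipschitz bounds. Your closing technical remark -- passing first to the mild (integral) form of~\eqref{sde1}--\eqref{sde2} and deducing the differential form afterwards, rather than differentiating the improper integrals directly -- is indeed the cleaner way to make the paper's implicit step rigorous, and is consistent with what the paper does.
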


\begin{proof}
We follow closely the proof of Theorem~\ref{theo: lienarization}.
Let $\mathcal Y$ denote the space of all continuous functions  $h\colon [0, \infty)\times X\to X$ such that 
\[
\|h\|_{\mathcal Y}:=\sup_{t\ge 0} \|h(t, \cdot)\|_\infty=\sup_{t, x}|h(t,x)| <+\infty.
\]
Then, $(\mathcal Y, \| \cdot \|_{\mathcal Y})$ is a Banach space. We define an operator $\mathcal T\colon \mathcal Y\to \mathcal Y$ by 
\[
(\mathcal Th) (t, x) = \sum_{k\in K^s\cup K^u}(\mathcal T_k h) (t, x),
\]
where 
\begin{itemize}
\item for $k\in K^s$, we set
\begin{equation*}
(\mathcal T_kh) (t, x) = \int_0^t T(t, s)P^k(s) (f(s, T(s, t)x+h(s, T(s, t)x)))\, ds,
\end{equation*}
for every $t\ge 0$ and $x\in X$;
\item for $k\in K^u$, we set 
\begin{equation*}
(\mathcal T_k h)(t, x) =-\int_{t}^\infty T(t, s)P^k(s)(f(s, T(s, t)x+h(s, T(s, t)x)))\, ds,
\end{equation*}
for every $t\ge 0$ and $x\in X$.
\end{itemize}

Observe that 
\begin{equation*}
\begin{split}
|(\mathcal T h) (t, x)| &\leq \sum_{k\in K^s\cup K^u}|(\mathcal T_k h) (t, x) |\\
&\leq \sum_{k\in K^s}|(\mathcal T_kh) (t, x)|+ \sum_{k\in K^u}|(\mathcal T_k h) (t, x)|\\
&\leq \sum_{k\in K^s} \int_{0}^t \|T(t, s)P^k(s)\|\cdot  \|P^k(s)f(s, \cdot)\|_\infty\, ds\\
&\phantom{=}+\sum_{k\in K^u}\int_{t}^\infty \|T(t, s)P^k(s)\| \cdot \|P^k(s)f(s, \cdot)\|_\infty\, ds,\\
\end{split}
\end{equation*}
which combined with \eqref{c1}, \eqref{c3} and \eqref{c1c} implies that
\begin{equation*}
\begin{split}
\sup_{t\ge 0}\|(\mathcal T h)(t, \cdot) \|_\infty &<+\infty.\\
\end{split}
\end{equation*}
This easily implies that $\mathcal Th\in \mathcal Y$.  Take now $h^1, h^2\in \mathcal Y$. By using~\eqref{c2} and \eqref{c2c} we get that for each $k\in K^s$,
\[
\begin{split}
| (\mathcal T_k h^1) (t, x)-(\mathcal T_k h^2) (t, x)| &\leq \int_{0}^t \|T(t,s)P^k(s) \|\mu^k(s) \lVert h^1(s, \cdot)-h^2(s, \cdot)\rVert_\infty \, ds\\
&\leq \lambda_k \lVert h^1-h^2\rVert_{\mathcal{Y}}, \\
\end{split}
\]
for $x\in X$ and $t\ge 0$. Similarly, by using~\eqref{c4} and~\eqref{c2c}, we have that  for each $k\in K^u$,
\[
\begin{split}
| (\mathcal T_kh^1) (t, x)-(\mathcal T_k h^2) (t, x)| &\leq \int_{t}^\infty \|T(t,s)P^k(s)\|\mu^k(s) \lVert h^1(s, \cdot)-h^2(s, \cdot)\rVert_\infty\, ds \\
&\leq \lambda_k \lVert h^1- h^2\rVert_{\mathcal{Y}}, \\
\end{split}
\]
for $x\in X$ and $t\ge 0$. Consequently,
\begin{displaymath}
\begin{split}
| (\mathcal Th^1) (t, x)-(\mathcal Th^2) (t, x)| &\leq \sum_{k\in K^s\cup K^u}\lambda_k \lVert h^1- h^2\rVert_{\mathcal{Y}}  \\
\end{split}
\end{displaymath}
for every $x\in X$ and $t\ge 0$ and thus,
\begin{displaymath}
\begin{split}
\lVert \mathcal Th^1-\mathcal Th^2 \rVert_{\mathcal{Y}} &\leq \sum_{k\in K^s\cup K^u}\lambda_k \lVert  h^1-h^2\rVert_{\mathcal{Y}}.  \\
\end{split}
\end{displaymath}
Hence, by~\eqref{contr} we conclude that $\mathcal T$ is a contraction. Therefore, $\mathcal T$ has a unique fixed point $h\in \mathcal Y$.  Therefore,
\begin{equation}\label{eq: aux conj}
h(t, T(t,s) x) =(\mathcal T h)(t, T(t,s)x) =\sum_{k\in K^s\cup K^u}(\mathcal T_kh) (t, T(t,s) x),
\end{equation}
for $x\in X$ and $t, s\ge 0$.
Now, for $k\in K^s$, $x\in X$ and $t, s\ge 0$, we have that 
\begin{displaymath}
\begin{split}
&(\mathcal T_kh) (t, T(t,s) x)\\
&= \int_{0}^{t} T(t, r)P^k(r)(f(r, T(r,t)T(t,s)x+h(r,T(r, t)T(t,s)x)))\, dr\\
&=\int_{0}^{t} T(t, r)P^k(r)(f(r, T(r,s)x+h(r, T(r, s)x)))\, dr\\
&=T(t,s) \int_{0}^{s} T(s, r)P^k(r)(f(r, T(r,s)x+h(r, T(r, s)x)))\, dr\\
&\phantom{=}+ \int_s^t T(t, r)P^k(r)(f(r, T(r,s)x+h(r, T(r, s)x)))\, dr \\
&=T(t,s)(\mathcal T_k h) (s, x)+ \int_s^t T(t, r)P^k(r)(f(r, T(r,s)x+h(r, T(r, s)x)))\, dr.
\end{split}
\end{displaymath}
Similarly, 
 for $k\in K^u$, $x\in X$ and $t, s\ge 0$, we have that 
\begin{displaymath}
\begin{split}
&(\mathcal T_k h) (t, T(t,s) x)\\
&= -\int_{t}^{\infty} T(t,r)P^k(r) (f(r,T(r, t)T(t,s)x+h(r,T(r, t)T(t,s)x)))\, dr\\
&=-\int_{t}^{\infty} T(t, r)P^k(r) (f(r, T(r, s)x+h(r, T(r, s)x)))\, dr\\
&=-T(t,s)\int_{s}^{\infty} T(s, r)P^k(r) (f(r,T(r,s)x+h(r, T(r, s)x)))\, dr\\
&\phantom{=}+ \int_s^t T(t, r)P^k(r) (f(r, T(r, s)x+h(r, T(r, s)x)))\, dr\\
&=T(t,s)(\mathcal T_k h) (s, x)+ \int_s^t T(t, r)P^k(r) (f(r, T(r, s)x+h(r, T(r, s)x)))\, dr.
\end{split}
\end{displaymath}
Combining these observations with  \eqref{eq: aux conj} and the fact that $h$ is a fixed point of $\mathcal T$, we obtain that
\[
\begin{split}
h(t, T(t,s) x) &=T(t,s) h(s, x)+\int_s^t T(t, r) (f(r, T(r, s)x+h(r, T(r, s)x)))\, dr \\
&\phantom{=}-\sum_{k\in K^c} \int_s^t T(t, r)P^k(r) (f(r, T(r, s)x+h(r, T(r, s)x)))\, dr,
\end{split}
\]
for $t, s\ge 0$ and $x\in X$.  By differentiating the above equality, we easily conclude that if $t\mapsto x(t)$ is a solution of~\eqref{lde}, then $t\mapsto H(t, x(t))$ is a solution of~\eqref{sde1}, 
where $H(t,x):=x+h(t,x)$.

We now consider $\bar h \in \mathcal Y$ given by 
\begin{equation*}
\bar h (t, x) = \sum_{k\in K^s\cup K^u}\bar h^k (t, x) ,
\end{equation*}
where
\begin{itemize}
\item for $k\in K^s$ and  $t\geq 0$,
\begin{displaymath}
\bar h^k (t, x):= -\int_{0}^{t} T(t, s)P^k(s) f(s, U(s, t)x)\, ds;
\end{displaymath}
\item  for $k\in K^u$ and $t\ge 0$,
\begin{displaymath}
\bar h^k (t, x):= \int_{t}^{\infty} T(t, s)P^k(s) f(s, U(s,t)x)\, ds.
\end{displaymath}

\end{itemize}
It follows easily from \eqref{c1}, \eqref{c3} and \eqref{c1c} that indeed $\bar h\in \mathcal Y$. Moreover, we observe that given $x\in X$ and $k\in K^s$, we have that
\[
\begin{split}
&\bar{h}^k(t, U(t,s)x)  \\
&=-\int_{0}^{t} T(t,r)P^k(r) f(r, U(r, t)U(t,s)x)\, dr\\
&=-\int_{0}^{t} T(t,r)P^k(r) f(r, U(r,s)x)\, dr\\
&=-T(t,s)\int_{0}^{s} T(s,r)P^k(r) f(r, U(r,s)x)\, dr-\int_{s}^{t} T(t,r)P^k(r) f(r, U(r,s)x)\, dr\\
&=T(t,s) \bar{h}^k(s, x)-\int_{s}^{t} T(t,r)P^k(r) f(r, U(r,s)x)\, dr,\\
\end{split}
\]
for $t, s\ge 0$ and $x\in X$. 
Moreover, for $k\in K^u$, $x\in X$ and $t, s \ge 0$, we have that 
\[
\begin{split}
&\bar{h}^k(t, U(t,s)x)  \\
&=\int_{t}^{\infty} T(t,r)P^k(r)f(r, U(r,t)U(t,s)x)\, dr\\
&=\int_{t}^{\infty} T(t,r)P^k(r)f(r, U(r,s)x)\, dr\\
&=T(t,s) \int_{s}^{\infty} T(s,r)P^k(r)f(r, U(r,s)x)\, dr-\int_s^t T(t,r)P^k(r)f(r, U(r,s)x)\, dr\\
&=T(t,s)\bar{h}^k(s, x)-\int_s^t T(t,r)P^k(r)f(r, U(r,s)x)\, dr.\\
\end{split}
\]
Consequently,  it follows that for $t, s\ge 0$ we have that 
\begin{displaymath}
\begin{split}
\bar h(t, U(t,s)x)&=T(t,s)\bar{h}(s, x)-\int_s^t T(t,r)f(r, U(r,s)x)\, dr \\
&\phantom{=}+\sum_{k\in K^c}\int_s^t T(t,r)P^k(r)f(r, U(r,s)x)\, dr.
\end{split}
\end{displaymath}
From this we easily conclude that if $t\mapsto y(t)$ is a solution of~\eqref{sde}, then  $t\mapsto \bar H(t, y(t))$ is a solution of~\eqref{sde2}, 
where $\bar H(t,x)=x+\bar h(t,x)$. Finally, we observe that  since $h, \bar h\in \mathcal Y$, we have that~\eqref{Bound} holds.

Suppose now that either $K^c=\emptyset$ or $P^k(t)f(t, \cdot)\equiv 0$ for every $k\in K^c$ and $t\in \R$. From the previous observations, we conclude that~\eqref{213} holds 
for $t, s\ge 0$ and $x\in X$. Moreover, for every $t\ge 0$ and $x\in X$, we have that 
\[
\begin{split}
\bar H(t, H(t, x))&=H(t, x)+\bar h(t, H(t, x))\\
&=x +h(t, x)+\bar h(t, H(t, x))\\
&=x +\sum_{k\in K^s}\int_{0}^t T(t,s)P^k(s) (f(s, T(s, t)x+h(s, T(s, t)x)))\, ds\\
&\phantom{=} -\sum_{k\in K^u}\int_{t}^\infty T(t, s)P^k(s)(f(s, T(s, t)x+h(s, T(s, t)x)))\, ds\\
&\phantom{=} -\sum_{k\in K^s}\int_{0}^{t} T(t, s)P^k(s)f(s, U(s, t)H(t, x))\, ds\\
&\phantom{=}+ \sum_{k\in K^u}\int_{t}^{\infty} T(t, s)P^k(s)f(s, U(s, t)H(t, x))\, ds.\\
\end{split}
\]
By applying~\eqref{213}, we conclude that $\bar H(t, H(t, x))=x$ for $x\in X$ and $t\ge 0$.

We now claim that  $H(t,\bar  H(t, x))=x$ for $x\in X$ and $t\ge 0$. Observe that
\begin{equation}\label{eq: aux Hn barHnc}
\begin{split}
 H(t, \bar H(t, x))-x=\bar h(t, x)+ h(t, \bar H(t, x)).
\end{split}
\end{equation}
For $t\ge 0$, we have that 
\begin{equation*}
\begin{split}
&\bar h(t, x)+ h(t, \bar H(t, x))\\
&=-\sum_{k\in K^s}\int_{0}^{t} T(t, s)P^k(s)f(s, U(s, t)x)\, ds\\
&\phantom{=}+ \sum_{k\in K^u}\int_{t}^{\infty} T(t, s)P^k(s)f(s, U(s, t)x)\, ds\\
&\phantom{=}+\sum_{k\in K^s}\int_{0}^t T(t, s)P^k(s) (f(s, T(s, t)\bar H(t, x)+h(s, T(s, t)\bar H(t,x))))\, ds\\
&\phantom{=} -\sum_{k\in K^u}\int_{t}^\infty T(t, s)P^k(s)(f(s, T(s, t)\bar H(t, x)+h(s, T(s, t)\bar H(t, x))))\, ds.\\
\end{split}
\end{equation*}
By~\eqref{213}, we have that 
\begin{equation*}
\begin{split}
T(s, t)\bar H(t, x)+h(s, T(s, t)\bar H(t, x))&=H(s, T(s, t)\bar H(t, x))\\
&=H(s, \bar H(s, U(s,t)x)),
\end{split}
\end{equation*}
and thus
\begin{equation*}
\begin{split}
&|\bar h(t, x)+ h(t, \bar H(t,x))|\\
&\leq\sum_{k\in K^s}\int_{0}^{t}  \|T(t, s)P^k(s)\| \cdot |P^k(s) f(s, H(s, \bar H(s, U(s,t)x)))-P^k(s) f(s, U(s, t)x)|\, ds\\
&\phantom{=}+ \sum_{k\in K^u}\int_{t}^{\infty}  \|T(t,s)P^k(s)\| \cdot  |P^k(s)f(s, U(s, t)x)-P^k(s) f(s, H(s, \bar H(s, U(s,t)x)))|\, ds\\
&\leq \sum_{k\in K^s}\int_{0}^{t} \|T(t, s)P^k(s)\|\mu^k(s) |H(s, \bar H(s, U(s,t)x)))-U(s, t)x|\, ds\\
&\phantom{=}+ \sum_{k\in K^u}\int_{t}^{\infty} \|T(t, s)P^k(s)\|\mu^k(s) |H(s, \bar H(s, U(s,t)x)))-U(s, t)x|\, ds.\\
\end{split}
\end{equation*}
Therefore, using \eqref{eq: aux Hn barHnc} it follows that
\begin{equation*}
\begin{split}
& |H(t, \bar H(t, x))-x|\\
&\leq \sum_{k\in K^s}\int_{0}^{t} \|T(t, s)P^k(s)\|\mu^k(s) |H(s, \bar H(s, U(s,t)x)))-U(s,t)x|\, ds\\
&\phantom{=}+ \sum_{k\in K^u}\int_{t}^{\infty}  \|T(t, s)P^k(s)|\mu^k(s) |H(s, \bar H(s, U(s,t)x)))-U(s, t)x|\, ds.\\
\end{split}
\end{equation*}
Set
\[
G(t,x)=H(t, \bar H(t,x))-x, \quad t\ge 0, \ x\in X.
\]
Now, since $h, \bar h\in \mathcal Y$, it follows from \eqref{eq: aux Hn barHnc} that $G \in \mathcal Y$,
which combined with \eqref{c2} and \eqref{c4} implies that
\begin{displaymath}
\|G\|_{\mathcal{Y}}\leq \sum_{k\in K^s\cup K^u}\lambda_k\|G\|_{\mathcal{Y}}.
\end{displaymath}
Thus, from~\eqref{contr} it follows that $G=0$, and consequently $H(t, \bar H(t, x))=x$ for every $x\in X$ and $t\ge 0$. We conclude that~\eqref{HH} holds which completes the proof of the theorem.

\end{proof}

\begin{example}
Let $X$ and $K$ be as in Example~\ref{EXM}. For $t\ge 0$ and $k\in K$, let $P^k(t)$ be the projection  onto the $k^{th}$ coordinate. Moreover, take
\[
A(t)=\text{diag}\left(-1,0,0,0,1\right) \quad t\ge 0,
\]
and consider $K^s=\{1,2\}$, $K^c=\{3\}$ and $K^u=\{4,5\}$. Take $\lambda_k=\frac{1}{5}$ for every $k\in K$ and
\begin{itemize}
\item $\nu^k(t)=1$ and $\mu^k(t)=\frac{1}{5}$, for $k\in \{1,5\}$ and  $t\ge 0$;
\item $\nu^k(t)=e^{-t}$ and $\mu^k_n=\frac{1}{5}e^{-t}$, for $k\in \{2,4\}$ and $t\ge 0$.
\end{itemize}
Let $f\colon [0, \infty)\times  X\to X$, $f=(f^1, \ldots, f^5)$  be a  continuous map such that 
\begin{itemize}
\item $\|f^k(t, \cdot)\|_\infty \le 1$ and $Lip(f^k(t, \cdot))\le \frac{1}{5}$, for $k\in \{1,5\}$ and $t\ge 0$;
\item $\|f^k(t, \cdot)\|_\infty \le  e^{-t}$ and $Lip(f^k(t, \cdot))\le \frac{1}{5}e^{-t}$, for $k\in \{2, 4\}$ and $t\ge 0$.
\end{itemize}
It is easy to verify that under the above assumptions, Theorem~\ref{THM2} is applicable.
\end{example}


\medskip{\bf Acknowledgements.}
 L.B. was partially supported by a CNPq-Brazil PQ fellowship under Grants No. 306484/2018-8 and 307633/2021-7. D. D. was supported in part by Croatian Science Foundation under the project IP-2019-04-1239 and by the University of Rijeka under the projects uniri-prirod-18-9 and uniri-prprirod-19-16.

\bibliographystyle{abbrv}

\end{document}